\newsavebox\solbox
\newenvironment{sol}%
{
\@parboxrestore%
\begin{lrbox}{\solbox}%
\begin{minipage}{0.8\linewidth}
}
{\end{minipage}\end{lrbox}%
\begin{center}
\framebox{\usebox\solbox}
\end{center}
}
\newtheorem{theorem}{Theorem}[section]
\newtheorem{lemma}[theorem]{Lemma}
\newcommand{\R}{\mathbb R}
\newcommand{\C}{\mathbb C}
\newcommand{\ii}{\mathfrak i}
\newcommand{\cT}{{\cal T}}
\newcommand{\diag}{{\rm diag}}
\newcommand{\orbr}{{\mathcal O}_{\R}}
\newcommand{\bun}{{\mathcal B}}
\newcommand{\bunr}{{\mathcal B}_{\R}}
\newcommand{\cbunr}{{\overline{\mathcal B}}_{\R}}
\newcommand{\cod}{{\rm codim}_\R}
\newcommand{\hide}[1]{}
\title{Generic real Jordan canonical forms}
\author[1]{Fernando De Ter\'an\thanks{\tt fteran@math.uc3m.es}}
\author[1] {Froil\'an M. Dopico\thanks{\tt dopico@math.uc3m.es}}
\affil[1]{Universidad Carlos III de Madrid, ROR: https://ror.org/03ths8210, Departamento de Matemáticas, Avda. Universidad 30, 28911, Legan\'es, Spain}
\date{}
\begin{document}

\maketitle

\begin{abstract}
    We obtain the generic real Jordan canonical forms for $n\times n$ matrices with real entries. More precisely, we prove that the set of $n\times n$ real matrices is the union of the closures of $\lfloor n/2\rfloor+1$ sets, which are called {\em generic bundles}, as they are particular ``bundles''.  In general, a bundle is the set of $n\times n$ real matrices with the same real Jordan canonical form, up to the values of the eigenvalues, provided that the eigenvalues which are distinct in one matrix of the bundle remain distinct in any other matrix of the same bundle. The $k$th generic bundle, for $0\leq k\leq\lfloor n/2\rfloor$, contains the $n\times n$ real matrices having $k$ different pairs of non-real conjugate eigenvalues and $n-2k$ different real eigenvalues. We prove that each of the $\lfloor n/2\rfloor+1$ generic bundles is an open subset of the set of $n\times n$ real matrices. Some numerical experiments are carried out with large sets of random matrices of different sizes to confirm that all the generic bundles show up, and only these ones.
\end{abstract}

{\bf Keywords}. Real square matrices; similarity orbit; similarity bundle; real Jordan canonical form;  closure; Schur form; random matrices; real eigenvalues.

\medskip

{\bf AMS subject classifications MSC2020.} 15A18, 15A20, 15A21, 15B52, 65F15.

\section{Introduction}\label{intro.sec}

It is well-known that most $n\times n$ matrices with complex entries have $n$ different eigenvalues. In the terminology of the present work, this can be rephrased as follows: the ``generic" Jordan canonical form for $n\times n$ complex matrices consists of a direct sum of $n$ Jordan blocks of size $1\times1$ corresponding to different eigenvalues. Recall that the Jordan form is a canonical form under {\em similarity} of complex matrices, where two matrices $A$ and $B$ are {\em similar} if there is some invertible matrix $P$ such that $B=P^{-1}AP$. It is natural to ask: what happens when the matrices have real entries and the allowed similarities are restricted to be real? Namely: which are the generic Jordan canonical forms of real $n\times n$ matrices  under real similarity? It is known that the eigenvalues of a real matrix are either real numbers or pairs of non-real conjugate numbers, but, how many of each kind appear generically in an $n\times n$ matrix? This is the question we aim to answer in this work from a topological point of view.

The {\em real Jordan canonical form} of $A$ (denoted by RJCF($A$)) is the representative of the real matrix $A$ under real similarity. It is a direct sum of blocks of two different kinds which correspond to, respectively, real eigenvalues and pairs of non-real conjugate eigenvalues of $A$ (see \cite[Th. 3.4.1.5]{hj13} and Section \ref{basic.sec} for the definitions). In this work, we determine the generic real Jordan canonical form of real $n\times n$ matrices. More precisely, we prove that there is a finite number of $n\times n$ RJCFs (which we explicitly describe) such that the set of matrices having these RJCFs is open and dense in $\R^{n\times n}$, that is, in the set of real $n\times n$ matrices. Each of these RJCFs is determined by the number of real eigenvalues.

Let us provide a more detailed description of the contribution of this work. The main result is Theorem \ref{main.th}, where we show that $\R^{n\times n}$ is the union of the closures of $\lfloor n/2\rfloor+1$ bundles under real similarity, which are different to each other (we prove a stronger fact, namely that the closure of a given generic bundle has empty intersection with any other generic bundle), and that these bundles are open in $\R^{n\times n}$ (in the standard topology). Therefore, the union of these bundles is an open dense set in $\R^{n\times n}$ and, so, it is generic in the standard topological sense. We call these $\lfloor n/2\rfloor+1$ bundles {\it generic} under real similarity. Each bundle corresponds to the set of real $n\times n$ matrices having the same number of different real eigenvalues, and the rest of eigenvalues being different couples of non-real conjugate eigenvalues. In terms of RJCFs, the $t$th bundle is a direct sum of $t$ real Jordan blocks of size $2\times 2$ associated with different couples of non-real conjugate eigenvalues together with $n-2t$ Jordan blocks of size $1\times 1$ associated with different real eigenvalues. Since $t$ ranges from $0$ to $\lfloor n/2\rfloor$, all possible numbers of real eigenvalues appear in the union. In other words, each generic bundle corresponds to an integer $k$, for $0 \leq k\leq n$, with the same parity as $n$, and contains all the $n\times n$ real matrices having exactly $k$ different real eigenvalues and $(n-k)/2$ different couples of non-real conjugate eigenvalues.

It is expected that the dimension of each generic bundle is $n^2$, which coincides with the dimension of $\R^{n\times n}$. By introducing an appropriate notion of dimension for the similarity bundles, and using the {\em codimension} within $\R^{n\times n}$ instead, we prove that this is indeed the case, namely we show that the codimension of each generic bundle is $0$.

The problem we address in this paper is related to the one of determining which is the expected RJCF of random matrices. Mimicking the previous considerations for the generic RJCF, it is natural to expect that a random real matrix has all its eigenvalues different to each other, so the remaining question is: how many of them are real? This question has been addressed in several previous works. In \cite{edelman97}, Edelman obtained, for every $0 \leq k\leq n$, the probability of a random $n\times n$ real matrix to have exactly $k$ real eigenvalues, whereas in \cite{eks94} the expected number of real eigenvalues was calculated (random matrices in these works are matrices whose entries follow independent standard normal distributions, though it is mentioned that, after extensive numerical experience, similar results can be obtained for other distributions). According to our results, all values $0 \leq k\leq n$ having the same parity as $n$ must have a positive probability, and they should show up experimentally in a sufficiently large number of tests. We confirm this fact with several numerical experiments.

\section{Notation and basic definitions}\label{basic.sec}

Following the notation in \cite{hj13}, by $C(a,b):=\begin{bsmallmatrix}
        \phantom{-}a&\phantom{!}b\\-b&\phantom{!}a
    \end{bsmallmatrix}$ we denote a $2\times2$ real Jordan block associated with a couple of non-real conjugate eigenvalues $a\pm b\ii$, with $a,b\in\R$ and $b>0$ (from now on, $\ii$ denotes the imaginary unit). Also, we use the following notation for, respectively, a $k\times k$ Jordan block associated with the eigenvalue $\mu\in\C$ and a $2k\times 2k$  real Jordan block associated with a couple of non-real conjugate eigenvalues $a\pm b\ii$, with $a,b\in\R$ and $b>0$:
\begin{equation}\label{jordanblocks}
    J_k(\mu)=\begin{bmatrix}
        \mu&1\\&\ddots&\ddots\\&&\mu&1\\&&&\mu
    \end{bmatrix}_{k\times k},\qquad C_k(a,b)=\begin{bmatrix}
        C(a,b)&I_2\\&\ddots&\ddots\\&&C(a,b)&I_2\\&&&C(a,b)
    \end{bmatrix}_{2k\times2k}.
\end{equation}
When $k=1$, the block $J_1(\mu)$ will be written as $[\mu]$ for simplicity, as it is just a number.

For $A\in\R^{n\times n}$, its {\em real orbit under similarity} is the set of real matrices which are similar to $A$, namely,
\begin{equation}\label{orbit}
    \orbr(A)=\{P^{-1}AP:\ \ \mbox{$P\in\R^{n\times n}$ invertible}\}.
\end{equation}

The orbit $\orbr(A)$ is a differentiable manifold over $\R$, and its tangent space at $A$ is the set (see, for instance, \cite[\S 4.1]{de95}):
$$
{\cal T}_A=\{XA-AX:\ X\in\R^{n\times n}\}.
$$
Therefore, the dimension of $\cT_A$, denoted by $\dim_\R(\cT_A)$, is the dimension of the  real vector space of matrices of the form $XA-AX$. This is the dimension of the orbit $\orbr(A)$. We will consider instead the {\em codimension} of $\orbr(A)$, namely the dimension of the normal space to $\cT_A$, which is equal to $\cod\cT_A=n^2-\dim_\R\cT_A$, as $n^2$ is the dimension of the ambient space $\R^{n\times n}$. Moreover,
\begin{equation} \label{eq.codimorb}
\cod\orbr(A)=\dim_\R\{X\in\R^{n\times n} \, : \, XA-AX=0  \},
\end{equation}
namely the codimension of the orbit $\orbr(A)$ is the dimension of the solution space of the linear equation $XA-AX=0$, which is a real vector space (see, for instance, \cite[p. 71]{de95}).

The {\em real bundle under similarity} of $A\in\R^{n\times n}$, denoted by $\bunr(A)$, is the set of real matrices which have the same RJCF as $A$, up to the specific values of the eigenvalues, provided that the eigenvalues which are distinct in one matrix remain distinct in the other ones. More precisely, if
\begin{equation}\label{rjcf}
    {\rm RJCF}(A)=\bigoplus_{i=1}^r\left(\bigoplus_{j=1}^{d_i}C_{\ell_{j,i}}(\tilde a_i, \tilde b_i)\right)\oplus\bigoplus_{i=1}^s\left(\bigoplus_{j=1}^{e_i} J_{k_{j,i}}(\tilde c_i)\right),
\end{equation}
with $\tilde a_i,\tilde b_i >0, \tilde c_i \in \mathbb{R}$, and $\tilde c_i\neq \tilde c_{i'},\  (\tilde a_i, \tilde b_i) \neq (\tilde a_{i'}, \tilde b_{i'})$, for $i\neq i'$, then the real bundle of $A$ is
\begin{equation} \label{bundle}
    \bunr(A):= \bigcup_{\substack{a_i, \, b_i>0, \, c_i \, \in \, \R , \\ c_i\neq c_{i'}, \, \, \mathrm{if} \, i\ne i', \\ (a_i,b_i) \neq (a_{i'}, b_{i'}) , \, \, \mathrm{if} \, i\ne i'} }  \orbr
    \left(\bigoplus_{i=1}^r\left(\bigoplus_{j=1}^{d_i}C_{\ell_{j,i}}(a_i,b_i)\right)\oplus\bigoplus_{i=1}^s\left(\bigoplus_{j=1}^{e_i} J_{k_{j,i}}(c_i)\right)\right).
\end{equation}

Following Arnold, \cite[\S5.5]{arnold71}, the codimension of the bundle $\bunr(A)$ is defined as the difference between the codimension of the orbit and the number of different eigenvalues in any matrix of the bundle. More precisely, if  RJCF($A$) is as in \eqref{rjcf}, then
\begin{equation} \label{eq.codimbund}
\cod\bunr(A)=\cod\orbr(A)-(2r+s).
\end{equation}

Since bundles under similarity are subsets of $\R^{n\times n}$, the topological notions (like the {\em closure} or the openness) of bundles are considered in the standard (Euclidean) topology of $\R^{n\times n}$. The closure of $\bunr(A)$ will be denoted by $\cbunr(A)$.

\section{The main result}\label{main.sec}

In Theorem \ref{main.th} we provide the generic RJCFs of real $n\times n$ matrices. This is done by showing that $\R^{n\times n}$ is the union of the closures of $\lfloor n/2\rfloor+1$ bundles, which correspond to some particular RJCFs. These are the ``generic" bundles (in each of these RJCFs the eigenvalues are different to each other and the number of real eigenvalues determines each RJCF). Moreover, we see that the intersection of the closure of any of the generic bundles with any other generic bundle is empty, which implies that none of these closures is included in any other. We also prove that the generic bundles are open sets. Note that this, in particular, implies that the union of these bundles is an open and dense set in $\R^{n\times n}$. Finally, we show that the codimension of each of the generic bundles is equal to  zero.

To prove Theorem \ref{main.th} we will use the following technical lemma.

\begin{lemma}\label{nrealevals.lemma}
 If $M\in\cbunr \left(\bigoplus_{i=1}^tC(a_i,b_i)\oplus\bigoplus_{i=1}^{n-2t}[c_i]\right)$, for some $0\leq t\leq\lfloor n/2\rfloor$, with $a_i, b_i, c_i\in\R$, $ b_i >0$, then
 \begin{itemize}
     \item[\rm (a)]$M$ has, at least, $n-2t$ real eigenvalues, counting multiplicities, and
     \item[\rm(b)] if $M$ has more than $n-2t$ real eigenvalues, counting multiplicities, then at least one of the eigenvalues of $M$ is multiple.
 \end{itemize}
\end{lemma}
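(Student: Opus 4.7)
The plan is to exploit continuity of the eigenvalues as roots of the characteristic polynomial. Take $M\in\cbunr\left(\bigoplus_{i=1}^t C(a_i,b_i)\oplus\bigoplus_{i=1}^{n-2t}[c_i]\right)$ and pick a sequence $\{M_k\}_{k\in\mathbb{N}}$ in the bundle with $M_k\to M$. By the definition of the bundle, every $M_k$ has exactly $n-2t$ pairwise distinct real eigenvalues, which we list as $\mu_1^{(k)},\ldots,\mu_{n-2t}^{(k)}\in\R$, and $t$ pairwise distinct non-real conjugate pairs $\lambda_j^{(k)},\overline{\lambda_j^{(k)}}\in\C\setminus\R$, for $j=1,\ldots,t$. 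Since $M_k\to M$ entails coefficient-wise convergence of the monic characteristic polynomials, the standard continuity-of-roots theorem for monic polynomials (with multiplicities) allows us, after passing to a subsequence and relabeling, to assume that each of the $n$ eigenvalue sequences above converges to an eigenvalue of $M$, and that the multiset of these $n$ limits coincides with the multiset of eigenvalues of $M$.

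For part (a), each $\mu_i^{(k)}$ belongs to the closed set $\R$, so its limit $\mu_i$ is real. This yields $n-2t$ real eigenvalues of $M$ counted with multiplicity: coincidences among the $\mu_i$ genuinely raise the multiplicity of that real root in the characteristic polynomial of $M$, since the factors $(x-\mu_i^{(k)})$ collapse to $(x-\mu_i)$ in the limit.

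For part (b), suppose $M$ has strictly more than $n-2t$ real eigenvalues counted with multiplicity. Since non-real eigenvalues come in conjugate pairs, the count of real eigenvalues has the same parity as $n$, so it is in fact at least $n-2t+2$. Consequently, at least two of the $2t$ non-real eigenvalue sequences of the $M_k$ must have real limits. By continuity of complex conjugation, the limits of $\lambda_j^{(k)}$ and $\overline{\lambda_j^{(k)}}$ are complex conjugates, so each pair contributes either two real limits (both equal to the same real number $c_j$) or none at all to the real line. Hence at least one pair collapses, producing an eigenvalue $c_j$ of $M$ of multiplicity at least two, i.e., a multiple eigenvalue of $M$.

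The only delicate point of this plan is the ``continuity with multiplicity'' statement for roots of polynomials; with that in hand, both parts follow from the elementary observations that $\R$ is closed in $\C$ and that complex conjugation is continuous.
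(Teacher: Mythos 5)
Your proof is correct, but it takes a genuinely different route from the paper. You work directly with the eigenvalues as roots of the characteristic polynomial: after passing to a subsequence you make each of the $n$ eigenvalue sequences converge and identify the multiset of limits with the spectrum of $M$, then part (a) follows because $\R$ is closed in $\C$, and part (b) because complex conjugation is continuous, so a conjugate pair whose limits land on the real line must collapse to a single real number appearing twice. The paper instead routes through the real Schur factorization: it writes $Q_m^\top M_m Q_m$ in block upper-triangular form with $t$ diagonal blocks $T_{i,m}\in\R^{2\times 2}$ and $n-2t$ scalar diagonal entries $c_{i,m}$, uses compactness of the orthogonal group to extract $Q_m\to Q$, and reads the $n-2t$ real eigenvalues off the limiting $1\times 1$ diagonal entries of $Q^\top M Q$; for (b) the only way a $2\times 2$ block can yield real eigenvalues in the limit is $b_{i,m}\to 0$, forcing a double eigenvalue. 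Both proofs extract a convergent subsequence at the same essential step (you on the bounded eigenvalue tuples via Bolzano--Weierstrass, the paper on the $Q_m$), and both invoke eigenvalue continuity; the paper's version keeps the conjugate-pair structure manifest through the $2\times 2$ blocks, while yours handles that bookkeeping explicitly via conjugation. Two small remarks: your parity observation (so that more than $n-2t$ forces at least $n-2t+2$) is correct but not needed, since one real limit in a pair already forces its conjugate partner to be the same real number; and the ``continuity of roots with multiplicities'' you flag as the delicate point can be sidestepped entirely by just applying Bolzano--Weierstrass to the $n$ bounded eigenvalue sequences and noting that $\prod_i(x-z_i^{(k)})$ converges coefficient-wise both to the characteristic polynomial of $M$ and to $\prod_i(x-z_i)$, which forces the limit multiset to equal the spectrum of $M$.
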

\begin{proof}
    If $M\in\cbunr \left(\bigoplus_{i=1}^tC(a_i,b_i)\oplus\bigoplus_{i=1}^{n-2t}[c_i]\right)$, there is a sequence $\{M_m\}_{m\in\mathbb N}$ which converges to $M$ and such that $M_m\in\bunr\left(\bigoplus_{i=1}^tC(a_i,b_i)\oplus\bigoplus_{i=1}^{n-2t}[c_i]\right)$, for all $m\in\mathbb N$. Let %${\rm RJCF}(M)=\bigoplus_{i=1}^tC(a_{i},b_{i})\oplus\bigoplus_{i=1}^{n-2t}[c_{i}]$ and
    ${\rm RJCF}(M_m)=\bigoplus_{i=1}^tC(a_{i,m},b_{i,m})\oplus\bigoplus_{i=1}^{n-2t}[c_{i,m}]$, for some $a_{i,m},b_{i,m},c_{i,m}\in\R$,  $b_{i,m} >0$. By the real Schur factorization (see, for instance, \cite[Th. 2.3.4-(b)]{hj13}), there is an orthogonal matrix $Q_m \in \R^{n\times n}$ such that
    \begin{equation}\label{mk}
Q_m^\top M_mQ_m=\begin{bmatrix}
T_{1,m}&*&*&*&\cdots&*\\&\ddots&*&*&\cdots&*\\&& T_{t,m} \\&&&c_{1,m}&\ddots&\vdots\\&&&&\ddots&*\\0&&&&&c_{n-2t,m}
\end{bmatrix} \in \R^{n \times n},
\end{equation}
where $T_{i,m} \in \R^{2 \times 2}$ is real similar to $C(a_{i,m},b_{i,m})$ for $1 \leq i \leq t$, the block lower triangular part of the right-hand side matrix in \eqref{mk} is zero and the entries of the block upper triangular part, marked with $*$, are not of interest in our developments. Since the orthogonal group is a compact set, taking a subsequence if necessary, the sequence $\{Q_m\}_{m\in\mathbb N}$ converges to some orthogonal matrix $Q$. Therefore, $\{Q_m^\top M_mQ_m\}_{m\in\mathbb N}$ converges to $Q^\top MQ$, which is similar to $M$ and has the same block upper triangular structure as \eqref{mk}. If the block diagonal part of $Q^\top MQ$ is $\diag (T_1, \ldots, T_t, c_1, \ldots c_{n-2t})$, where $T_i \in \R^{2 \times 2}$ and $c_i \in \R$, then $T_i =\lim_{m \rightarrow \infty} T_{i,m}$ and $c_i =\lim_{m \rightarrow \infty} c_{i,m}$. Moreover, if $\alpha_i , \beta_i$ are the two eigenvalues of $T_i$, then $\alpha_i = \lim_{m \rightarrow \infty} (a_{i,m} + b_{i,m} \ii)$ and  $\beta_i  = \lim_{m \rightarrow \infty} (a_{i,m} - b_{i,m} \ii)$, for $1 \leq i \leq t$, by the continuity of the eigenvalues \cite[Th. 2.4.9.2]{hj13}. Since the eigenvalues of $M$ are  $\alpha_1 , \beta_1, \ldots, \alpha_t , \beta_t, c_1, \ldots c_{n-2t}$ and $c_i\in\R$, because $c_{i,m}\in\R$, we get that the matrix $M$ has, at least, $n-2t$ real eigenvalues. This proves (a).

To prove (b), assume that $M$ has more than $n-2t$ real eigenvalues.
For this to happen, it must be $\lim_{m \rightarrow \infty} b_{i,m} = 0$, for some $1\leq i\leq t$. But then $\alpha_i = \beta_i = \lim_{m \rightarrow \infty} a_{i,m}$ is a multiple (at least double) eigenvalue of $M$.
\end{proof}

Now we are in the position to state and prove Theorem \ref{main.th}. The first term of the direct sum in the right-hand side of \eqref{mainid} is empty when $t=0$, and the same happens with the second term when $n-2t=0$, namely when $t=n/2$.

\begin{theorem}\label{main.th}
    The set of real $n\times n$ matrices is equal to the following finite union of bundle closures under similarity:
    \begin{equation}\label{mainid}
    \R^{n\times n}=\bigcup_{t=0}^{\lfloor n/2\rfloor}\cbunr \left(\bigoplus_{i=1}^tC(a_i,b_i)\oplus\bigoplus_{i=1}^{n-2t}[c_i]\right),
    \end{equation}
    with $a_i,b_i,c_i\in\R$, $b_i >0$, and where $(a_i,b_i) \neq (a_{i'}, b_{i'})$ and $c_i\neq c_{i'}$ for $i\neq i'$.

    Moreover:
    \begin{itemize}
        \item[\rm(i)] $\cod\bunr \left(\bigoplus_{i=1}^tC(a_i,b_i)\oplus\bigoplus_{i=1}^{n-2t}[c_i]\right)=0$, for all $0\leq t\leq \lfloor n/2\rfloor$,
        \item[\rm(ii)]  $\bunr \left(\bigoplus_{i=1}^tC(a_i,b_i)\oplus\bigoplus_{i=1}^{n-2t}[c_i]\right)\cap\cbunr \left(\bigoplus_{i=1}^{t'}C(a_i,b_i)\oplus\bigoplus_{i=1}^{n-2t'}[c_i]\right)=\emptyset$, for $t\neq t'$, and
        \item[\rm(iii)] $\bunr \left(\bigoplus_{i=1}^tC(a_i,b_i)\oplus\bigoplus_{i=1}^{n-2t}[c_i]\right)$ is open, for all $0\leq t\leq\lfloor n/2\rfloor$.
    \end{itemize}
\end{theorem}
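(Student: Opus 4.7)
The plan is to prove the four claims in sequence, writing $\bun_t := \bunr\bigl(\bigoplus_{i=1}^t C(a_i,b_i)\oplus\bigoplus_{i=1}^{n-2t}[c_i]\bigr)$ for brevity. For the main identity \eqref{mainid}, the inclusion $\supseteq$ is immediate, so I would focus on $\subseteq$. Given $A\in\R^{n\times n}$, I would apply the real Schur factorization \cite[Th. 2.3.4-(b)]{hj13} to write $A = QTQ^\top$ with $Q$ orthogonal and $T$ block upper quasi-triangular, whose diagonal contains some number $t$ of $2\times 2$ blocks (one for each non-real conjugate pair of eigenvalues of $A$, counted with multiplicity) and $n-2t$ real scalars. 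A small perturbation $T_\varepsilon$ that keeps the $2\times 2$ blocks non-real while making the $t$ conjugate pairs distinct and shifts the $n-2t$ real diagonal entries to distinct real numbers has the prescribed RJCF with all eigenvalues simple, so $T_\varepsilon\in\bun_t$, and $QT_\varepsilon Q^\top\in\bun_t$ converges to $A$ as $\varepsilon\to 0$, placing $A$ in $\overline{\bun_t}$.

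For (i) and (ii), I would first compute the real centralizer of $A_0:=\bigoplus_{i=1}^t C(a_i,b_i)\oplus\bigoplus_{i=1}^{n-2t}[c_i]$ via \eqref{eq.codimorb}. Since the blocks of $A_0$ have pairwise disjoint spectra, any real commuting matrix is forced to be block diagonal in the same partition. A direct inspection shows that the centralizer of $C(a,b)$ with $b>0$ is spanned by $I_2$ and $C(0,1)$ (real dimension $2$), and the centralizer of $[c]$ has real dimension $1$; adding up gives $\cod\orbr(A_0)=2t+(n-2t)=n$, and \eqref{eq.codimbund} with $r=t$, $s=n-2t$ then yields $\cod\bun_t=0$. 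Part (ii) follows directly from Lemma \ref{nrealevals.lemma}: any $M\in\bun_t$ has $n$ simple eigenvalues with exactly $n-2t$ real. If $t'<t$ and $M\in\overline{\bun_{t'}}$, then Lemma \ref{nrealevals.lemma}(a) would force $M$ to have at least $n-2t'>n-2t$ real eigenvalues; if $t'>t$, then $M$ has more than $n-2t'$ real eigenvalues, so Lemma \ref{nrealevals.lemma}(b) would force a multiple eigenvalue—both contradictions.

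The main obstacle is (iii), because a real perturbation could in principle interchange real and non-real eigenvalues. My plan is to combine the continuity of the spectrum \cite[Th. 2.4.9.2]{hj13} with the conjugation symmetry of the spectra of real matrices. Given $M\in\bun_t$, all eigenvalues are simple, so I can fix pairwise disjoint open disks, one around each eigenvalue of $M$, chosen symmetrically under complex conjugation (so the disks around real eigenvalues are self-conjugate, and each disk around a non-real eigenvalue is the conjugate of the disk around its conjugate partner), with the disks around non-real eigenvalues not meeting the real axis. For $M'$ sufficiently close to $M$, each disk contains exactly one eigenvalue of $M'$. Conjugation-invariance of the spectrum of $M'$ then forces the eigenvalue inside a self-conjugate disk to equal its own conjugate (hence to be real), and the eigenvalue inside a non-self-conjugate disk to pair with the one in the conjugate disk (hence to be non-real). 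Therefore $M'\in\bun_t$, completing the proof.
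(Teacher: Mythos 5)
Your proof is correct and covers all four parts, but it departs from the paper's route in two genuinely different places, which is worth noting.

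For the identity \eqref{mainid}, you conjugate $A$ to its real Schur form $T$ and perturb the quasi-triangular diagonal blocks to separate eigenvalues while preserving the split into $2\times 2$ non-real blocks and $1\times 1$ real blocks. The paper instead conjugates $A$ to ${\rm RJCF}(A)$ via $P^{-1}AP$ and adds an explicit one-parameter family of diagonal shifts $\frac{1}{km+j}$ to each Jordan block, then verifies directly that these shifts split all eigenvalues for $m$ large. Both constructions put $A$ in the closure of the same $\bun_t$ with $t=n_1$ (the total algebraic multiplicity of non-real conjugate pairs); yours is shorter and more conceptual (relying on genericity of distinct eigenvalues), while the paper's is fully explicit and hence self-verifying. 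If you wanted to tighten yours, you would only need to spell out the specific diagonal shift (e.g.\ distinct small real numbers on the scalars, distinct small real multiples of $I_2$ added to the $2\times 2$ blocks) that realizes the required distinctness, which is routine.

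For part (iii), you give a direct, self-contained argument: pick pairwise disjoint conjugation-symmetric disks around the $n$ simple eigenvalues of $M$, with the non-real disks avoiding the real axis, and observe that for $M'$ near $M$ the conjugation symmetry of $\sigma(M')$ forces the self-conjugate disk to carry a real eigenvalue and each conjugate pair of disks to carry a non-real conjugate pair, so $M'\in\bun_t$. The paper instead first notes that a small ball around $M$ lies in $\bigcup_t \bun_t$ (via continuity of eigenvalues) and then derives openness by contradiction using part~(ii). Your argument avoids reusing (ii) and is arguably more illuminating about \emph{why} real/non-real counts are locally stable; the paper's is shorter once (ii) is in hand. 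Parts (i) and (ii) coincide with the paper's arguments: block-diagonal centralizer from disjoint spectra for (i), and Lemma~\ref{nrealevals.lemma} for (ii).
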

\begin{proof}
    Let us first prove the identity \eqref{mainid}. For this, let $A\in\R^{n\times n}$. Assume that ${\rm RJCF}(A)$ is as in \eqref{rjcf}, so there is some invertible matrix $P\in\R^{n\times n}$ such that $P^{-1}AP={\rm RJCF}(A)$. We are going to see that $A\in \cbunr \left(\bigoplus_{i=1}^{n_1}C(a_i,b_i)\oplus \bigoplus_{i=1}^{n_2}[c_i]\right)$, where $n_1=\sum_{i=1}^r\sum_{j=1}^{d_i}\ell_{j,i}$ and $n_2=\sum_{i=1}^s\sum_{j=1}^{e_i}k_{j,i}$. For this, we consider the following sequence $\{ A_m \}_{m \in \mathbb{N}}$, constructed as a perturbation of $A = P\cdot{\rm RJCF}(A)\cdot P^{-1}$:
    $$
    \begin{array}{ccl}
    A_m&=&P\left(\displaystyle\displaystyle\bigoplus_{i=1}^r\left(\bigoplus_{j=1}^{d_i}C_{\ell_{j,i}}(\tilde a_i, \tilde b_i)+\begin{bmatrix}
        C(\frac{1}{m+j},\frac{1}{m+j})\\&C(\frac{1}{2m+j},\frac{1}{2m+j})\\&&\ddots\\&&&C(\frac{1}{\ell_{j,i}m+j},\frac{1}{\ell_{j,i}m+j})
    \end{bmatrix}\right)\right.\\&&\left.\displaystyle\oplus \bigoplus_{i=1}^s\left(\bigoplus_{j=1}^{e_i} J_{k_{j,i}} (\tilde c_i) +\begin{bmatrix}
        \frac{1}{m+j}\\&\frac{1}{2m+j}\\&&\ddots\\&&&\frac{1}{k_{j,i}m+j}
    \end{bmatrix}\right)\right)P^{-1}.
    \end{array}
    $$
Note that the sequence $\{A_m\}_{m\in\mathbb N}$ converges to $A = P\cdot {\rm RJCF}(A)\cdot P^{-1}$. Moreover, $A_m\in \bunr \left(\bigoplus_{i=1}^{n_1}C(a_i,b_i)\oplus \bigoplus_{i=1}^{n_2}[c_i]\right)$ for $m$ large enough, with $c_i\neq c_{i'}$ and $(a_i , b_i) \neq (a_{i'}, b_{i'})$, for $i\neq i'$, because of the following:
\begin{itemize}
    \item The real eigenvalues of $A_m$ are $ \tilde c_i+\frac{1}{km+j}$, where $1 \leq i \leq s$, $1\leq j\leq e_i$, and $1\leq k\leq k_{j,i}$. Let us see that all of them are distinct for $m$ sufficiently large. If $\tilde c_i+\frac{1}{km+j}=  \tilde c_i+\frac{1}{k'm+j'}$, for some $1\leq k \leq k_{j,i}\,$,  $1\leq k'\leq k_{j',i}\,$, and $1\leq j,j'\leq e_i\,$, then it must be $(k-k')m=j'-j$. But, for $m$ large enough, this is not possible unless $k=k'$ and $j=j'$, because  $|j'-j|\leq e_i$. For $i\neq i'$, since $\tilde c_i\neq \tilde c_{i'}$, a value of $m$ large enough guarantees that $\tilde c_i+\frac{1}{km+j}\neq \tilde c_{i'}+\frac{1}{k'm+j'}$, for any $1\leq k\leq k_{j,i},1\leq j\leq e_i,1\leq k'\leq k_{j',i'}$, and $1\leq j'\leq e_{i'}$, because $\frac{1}{km+j}$ and $\frac{1}{k'm+j'}$ tend to $0$ as $m$ tends to infinity.
    \item Similarly, the non-real complex conjugate eigenvalues of $A_m$ are $\tilde a_i+\frac{1}{\ell m+j} \pm (\tilde b_i+\frac{1}{\ell m+j})\ii$, for $1\leq i\leq r$, $1\leq j\leq d_i$, and $1\leq \ell\leq\ell_{j,i}$. To see that all of them are distinct for $m$ large enough, it is sufficient to check that those corresponding to the ``$+$'' sign are all distinct, since $\widetilde b_i >0$. If $\tilde a_i+\frac{1}{\ell m+j}+ (\tilde b_i+\frac{1}{\ell m+j})\ii= \tilde a_{i}+\frac{1}{\ell'm+j'}+ (\tilde b_{i}+\frac{1}{\ell'm+j'})\ii$, for some $1\leq \ell \leq \ell_{j,i}\,$,  $1\leq \ell' \leq \ell_{j',i}\,$, and $1\leq j,j'\leq d_i\,$, we conclude, as before, that, for $m$ large enough, it must be $\ell=\ell'$ and $j=j'$. Also, for $i \ne i'$, since $(\tilde a_i, \tilde b_i )\neq (\tilde a_{i'}, \tilde b_{i'})$, we get that, for $m$ large enough, $\tilde a_i+\frac{1}{\ell m+j}+ (\tilde b_i+\frac{1}{\ell m+j})\ii\neq \tilde a_{i'}+\frac{1}{\ell'm+j'}+ (\tilde b_{i'}+\frac{1}{\ell'm+j'})\ii$, for all $1\leq \ell\leq\ell_{j,i},1\leq\ell'\leq \ell_{j',i'}\,,$ and $1\leq j\leq d_i,1\leq j'\leq d_{i'}$. Finally, note that $\tilde b_i+\frac{1}{\ell m+j} > 0$ for $m$ sufficiently large because $\tilde b_i > 0$.
\end{itemize}
Therefore, $A\in \cbunr \left(\bigoplus_{i=1}^{n_1}C(a_i,b_i)\oplus \bigoplus_{i=1}^{n_2}[c_i]\right)$, with $n_1$ and $n_2$ as above and $a_i,b_i,c_i\in\R$, $b_i >0$, $(a_i,b_i) \neq (a_{i'}, b_{i'})$, and $c_i\neq c_{i'}$, for $i\neq i'$. Since it must be $2n_1+n_2=n$, if we set $t=n_1$, then $A\in \cbunr \left(\bigoplus_{i=1}^tC(a_i,b_i)\oplus\bigoplus_{i=1}^{n-2t}[c_i]\right)$, and this proves \eqref{mainid}.

Now let us prove claim (ii) in the statement. Assume that there is some matrix $A\in\bunr \left(\bigoplus_{i=1}^tC(a_i,b_i)\oplus\bigoplus_{i=1}^{n-2t}[c_i]\right)\cap\cbunr \left(\bigoplus_{i=1}^{t'}C(a_i,b_i)\oplus\bigoplus_{i=1}^{n-2t'}[c_i]\right)$, for some $1\leq t,t'\leq\lfloor n/2\rfloor$.

Since $A\in\bunr \left(\bigoplus_{i=1}^tC(a_i,b_i)\oplus\bigoplus_{i=1}^{n-2t}[c_i]\right)$, then $A$ has exactly $n-2t$ real eigenvalues. Moreover, since $A\in\cbunr \left(\bigoplus_{i=1}^{t'}C(a_i,b_i)\oplus\bigoplus_{i=1}^{n-2t'}[c_i]\right)$, %there is a sequence $\{S_m\}_{m\in\mathbb N}$, with $S_m\in\bunr \left(\bigoplus_{i=1}^{t'}C(a_i,b_i)\oplus\bigoplus_{i=1}^{n-2t'}[c_i]\right)$, which converges to $A$.
by part (a) of Lemma \ref{nrealevals.lemma}, $A$ has, at least, $n-2t'$ real eigenvalues, so it must be $n-2t\geq n-2t'$, namely $t'\geq t$. If $t'>t$, then part (b) in Lemma \ref{nrealevals.lemma} implies that $A$ has at least one multiple eigenvalue, which is not the case. Therefore, it must be $t'=t$.

 Let us now prove claim (iii). For simplicity, set $\bun_t:=\bunr \left(\bigoplus_{i=1}^tC(a_i,b_i)\oplus\bigoplus_{i=1}^{n-2t}[c_i]\right)$, with $a_i, b_i$, and $c_i$ as in the statement, for $0\leq t\leq\lfloor n/2\rfloor$. Let $M\in\bun_{t_0}$, for some $0\leq t_0\leq\lfloor n/2\rfloor$. We are going to see that there is some $\varepsilon>0$ such that $B(M,\varepsilon)\subseteq \bun_{t_0}$, where $B(M,\varepsilon)=\{A\in\R^{n\times n}:\ \|M-A\|_2<\varepsilon\}$ is the (open) ball of radius $\varepsilon$ centered at $M$ (and where $\|\cdot\|_2$ denotes the spectral norm, see, for instance, \cite[Example 5.6.6]{hj13}). First, by the continuity of the eigenvalues (see, for instance, \cite[Th. D.2]{hj13}), there is some $\varepsilon_0>0$ such that all matrices in $B(M,\varepsilon_0)$ have $n$ different eigenvalues, and this implies that $B(M,\varepsilon_0)\subseteq \bigcup_{t=0}^{\lfloor n/2\rfloor}\bun_t$. Now, assume, by contradiction, that, for all $\varepsilon<\varepsilon_0$, the open ball $B(M,\varepsilon)$ is not contained in $\bun_{t_0}$, which implies, since all matrices in $B(M,\varepsilon)$ have $n$ different eigenvalues, that $B(M,\varepsilon)\cap \left(\bigcup_{t\neq t_0}\bun_t\right)\neq\emptyset$. As a consequence, $M$ belongs to the closure of $\bigcup_{t\neq t_0}\bun_t$, which is equal to $\bigcup_{t\neq t_0}\overline{\bun_t}$. Hence, $M\in\bun_{t_0}\cap\overline{\bun_t}$, for some $t\neq t_0$, which is in contradiction with part (ii).

Finally, let us prove claim (i) in the statement. For this, we strongly rely on the developments in \cite[Ch. VIII]{gant}. More precisely, the dimension of the solution space of $XA-AX=0$ depends on RJCF($A$) and, for RJCF$(A)=\bigoplus_{i=1}^tC(a_i,b_i)\oplus\bigoplus_{i=1}^{n-2t}[c_i]$, with the parameters $a_i, b_i,$ and $ c_i$ as in the statement, it is equal to
$$
\sum_{i=1}^t \dim_\R\{ X\in\R^{2\times 2} \, : \, XC(a_i,b_i)-C(a_i,b_i)X=0 \}+\sum_{i=1}^{n-2t}\dim_\R\{ x\in\R \, : \, xc_i-c_ix=0 \}.
$$
Since $\dim_\R\{ x\in\R \, : \, xc_i-c_ix=0 \} =1$ and, by a straightforward calculation, $\dim_\R\{ X\in\R^{2\times 2} \, : \, XC(a_i,b_i)-C(a_i,b_i)X=0 \}=2$, we conclude from \eqref{eq.codimorb} that
$$
\cod\orbr\left(\bigoplus_{i=1}^tC(a_i,b_i)\oplus\bigoplus_{i=1}^{n-2t}[c_i]\right)=2t+ n - 2t=n,
$$
and, as a consequence of \eqref{eq.codimbund}, $\cod\bunr\left(\bigoplus_{i=1}^tC(a_i,b_i)\oplus\bigoplus_{i=1}^{n-2t}[c_i]\right)=n-n=0$.
\end{proof}

\section{ Real eigenvalues of random matrices: Numerical experiments}\label{random.sec}

In this last section we provide some numerical experiments to support our main result, i.e., Theorem \ref{main.th}, and to connect it with some results available in the literature on real eigenvalues of random matrices. Let us refer to a real $n\times n$ matrix whose entries are i.i.d. random variables with standard normal distribution as a ``random matrix". The probability that a random matrix has exactly $k$ real eigenvalues has been obtained in \cite{edelman97}. Following the notation in \cite{edelman97}, we denote this probability by $p_{n,k}$. According to Theorem \ref{main.th}, whenever $k$ and $n$ have the same parity, it must be $p_{n,k}\neq0$. The values of $p_{n,k}$ for $n=8$ and $n=9$ are provided in Table \ref{pnk.table}.

\begin{table}[]
    \centering
    \begin{tabular}{|c|c|c|}\hline
    $n$&$k$&$p_{n,k}$\\\hline\hline
         \multirow{5}{1em}{$8$} & $8$ & $6.10\cdot10^{-5}$ \\
& $6$ & $2.05\cdot10^{-2}$ \\
& $4$ & $3.46\cdot10^{-1}$ \\&$2$ &$5.71\cdot10^{-1}$  \\
         &$0$& $6.21\cdot10^{-2}$\\\hline
         \multirow{5}{1em}{$9$} & $9$ & $3.81\cdot10^{-6}$ \\
& $7$ & $2.56\cdot10^{-3}$ \\
& $5$ & $1.46\cdot10^{-1}$ \\&$3$ &$5.93\cdot10^{-1}$  \\
         &$1$& $2.57\cdot10^{-1}$\\\hline
    \end{tabular}
    \caption{The value of $p_{n,k}$ for $n=8$ and $n=9$ (from \cite[Table 1]{edelman97}).}
    \label{pnk.table}
\end{table}

We have computed the number of real eigenvalues of real random matrices using the following {\sc matlab} code:

\begin{sol}
\begin{verbatim}
function realevals(n,m)
% counts the number of real evals of m random nxn matrices
counter=zeros(m,1);
for i=1:m
    a=randn(n);
    e=eig(a);
    normi=abs(e);
    for j=1:n
        if abs(e(j)/normi(j)-1)<=eps*cond(a)
            counter(i)=counter(i)+1;
        elseif abs(e(j)/normi(j)+1)<=eps*cond(a)
            counter(i)=counter(i)+1;
        else
            counter(i)=counter(i);
        end
    end
end

x = unique(counter);
N = numel(x);
count = zeros(N,1);
for k = 1:N
   count(k) = sum(counter==x(k));
end
disp([ x(:) count ]);
\end{verbatim}
\end{sol}

The results, for $10^7$ test matrices for each size $n=8, 9, 10, 15$, which have been computed with {\sc matlab} R2024b, are displayed in Table \ref{results.table}. The column $k$ is the number of real eigenvalues, and the column $F$ denotes the frequency. The last column displays the ratio between the frequency and the total number of tests, which is an experimental approximation to the probability $p_{n,k}$. For $n=8$ and $n=9$ the experimental results are very close to the corresponding values $p_{n,k}$ in Table \ref{pnk.table}. Actually, for some values of $k$ they sharply coincide with the theoretical ones up to three digits of accuracy. For $n=10$ all possible numbers of real eigenvalues occur (namely $k=0,2,4,6,8,$ and $10)$. Therefore, for $n=8,9,$ and $10$, all generic bundles described in Theorem \ref{main.th} show up. However, for $n=15$ only up to $k=11$ real eigenvalues appear (so $k=13$ and $k=15$ are missing). In both cases, the results are in accordance with the expected number of real eigenvalues obtained in \cite{eks94}. More precisely, the expectation for $n=10$ is, approximately, $2.93$ (see Table 1 in \cite{eks94}), whose closest even number is $k=2$, which is the one with the highest frequency, whereas for $n=15$ it can be calculated from the formula in \cite[Cor. 5.3]{eks94} and gives, approximately, $3.51$, whose closest odd number is $k=3$, which is, again, the one with largest frequency.

\begin{table}[]
    \centering
    \begin{tabular}{|c|c|c|c|}\hline
    $n$&$k$&$F$&$F/10^7$\\\hline\hline
          \multirow{5}{1em}{$8$} & $8$ & $594$&$5.94\cdot10^{-5}$ \\
& $6$ & $205759$&$2.06\cdot10^{-2}$ \\
& $4$ & $3456910$&$3.46\cdot10^{-1}$  \\&$2$ &$5713924$& $5.71\cdot10^{-1}$\\
         &$0$& $622813$&$6.23\cdot10^{-2}$\\\hline
         \multirow{5}{1em}{$9$} & $9$ & $46$&$4.6\cdot10^{-6}$ \\
& $7$ & $35384$&$3.53\cdot10^{-3}$ \\
& $5$ & $1462469$&$1.46\cdot10^{-1}$ \\&$3$ &$5931622$&$5.93\cdot10^{-1}$  \\
         &$1$& $2570479$&$2.57\cdot10^{-1}$\\\hline
          \multirow{6}{1em}{$10$} &$10$&$2$&$2\cdot10^{-7}$\\
          & $8$ & $4325$&$4.22\cdot10^{-4}$ \\
& $6$ & $444855$&$4.45\cdot10^{-2}$ \\
& $4$ & $4172775$&$4.17\cdot10^{-1}$  \\&$2$ &$4944333$& $4.94\cdot10^{-1}$\\
         &$0$& $433710$&$4.34\cdot10^{-2}$\\\hline
         \multirow{6}{1em}{$15$} &$11$&$12$&$1.2\cdot10^{-6}$\\& $9$ & $5444$&$5.44\cdot10^{-4}$ \\
& $7$ & $335896$&$3.36\cdot10^{-2}$ \\
& $5$ & $3142390$&$3.14\cdot10^{-1}$ \\&$3$ &$5248744$&$5.25\cdot10^{-1}$  \\
         &$1$& $1267514$&$1.27\cdot10^{-1}$\\\hline
    \end{tabular}
    \caption{Number of real eigenvalues of $10^7$ real random matrices with size $n\times n$.}
    \label{results.table}
\end{table}

We can slightly force the random matrices for $n=15$ in order to get positive frequencies for $k=13$ and $k=15$, namely for $13$ and $15$ real eigenvalues to show up. For this, we add to each random matrix a diagonal matrix with the $(i,i)$ entry being equal to $2i$. The results for these matrices are displayed in Table \ref{trick.table}. As it can be seen, in this case all possible numbers of real eigenvalues (namely, $k$ real eigenvalues, with $k$ being any odd number from $1$ to $15$) show up, which confirms the genericity of the bundles described in Theorem \ref{main.th} for $n=15$.

\begin{table}[]
    \centering
    \begin{tabular}{|c|c|c|c|}\hline
    $k$&$F$&$F/10^6$\\\hline\hline
    $15$&$11240$&$1.12\cdot10^{-2}$\\
    $13$&$91675$&$9.17\cdot10^{-2}$\\
         $11$&$269714$&$2.70\cdot10^{-1}$\\ $9$ & $352263$&$3.52\cdot10^{-1}$ \\
 $7$ & $212891$&$2.13\cdot10^{-1}$ \\
 $5$ & $56619$&$5.66\cdot10^{-2}$ \\$3$ &$5477$&$5.48\cdot10^{-3}$  \\
         $1$& $121$&$1.21\cdot10^{-4}$\\\hline
    \end{tabular}
    \caption{Number of real eigenvalues of $10^6$ real matrices of the form \texttt{randn(15)+diag(2,4,6,...,2*15)}.}
    \label{trick.table}
\end{table}

\bigskip

\noindent{\bf Acknowledgments}. This work has been partially supported by grants PID2023-147366NB-I00 funded by MICIU/AEI/10.13039/501100011033 and FEDER/UE, and RED2022-134176-T.

\bibliographystyle{plain}
%\bibliography{biblio}

\begin{thebibliography}{1}

\bibitem{arnold71}
V.~I. Arnold.
\newblock On matrices depending on parameters.
\newblock {\em Russian Math. Surveys}, 26:29--43, 1971.

\bibitem{de95}
J.~W. Demmel and A.~Edelman.
\newblock The dimension of matrices (matrix pencils) with given {Jordan
  (Kronecker)} canonical forms.
\newblock {\em Linear Algebra Appl.}, 230:61--87, 1995.

\bibitem{edelman97}
A.~Edelman.
\newblock The probability that a random real {G}aussian matrix has $k$ real
  eigenvalues, related distributions, and the circular law.
\newblock {\em J. Multivar. Anal.}, 60:203--232, 1997.

\bibitem{eks94}
A.~Edelman, E.~Kostlan, and M.~Shub.
\newblock How many eigenvalues of a random matrix are real?
\newblock {\em J. Amer. Math. Society}, 7:247--267, 1994.

\bibitem{gant}
F.~R. Gantmacher.
\newblock {\em The Theory of Matrices}.
\newblock AMS Chelsea Publishing, New York, 1959.

\bibitem{hj13}
R.~A. Horn and C.~R. Johnson.
\newblock {\em {M}atrix {A}nalysis, 2nd Edition}.
\newblock Cambridge University Press, 2013.

\end{thebibliography}

\end{document}